\numberwithin{equation}{section}
\newtheorem{prop}{Proposition}
\newtheorem{theorem}{Theorem}
\newtheorem{corollary}{Corollary}
\newenvironment{proof}[1][Proof]{\textbf{#1:} }{\ \rule{0.5em}{0.5em}}
\renewcommand{\baselinestretch}{1.2}
\newcommand{\e}{\mathbb{E}}
\newcommand{\Z}{\mathbb{Z}}
\newcommand{\var}{\mathbb{V}\mbox{ar}}
\newcommand{\Pro}{\mathbb{P}}
\newcommand{\cL}{\mathcal{L}}
\newcommand{\cE}{\mathcal{E}}
\title{\LARGE     \vspace{0.5cm} Stability of a Queue Fed by Scheduled Traffic at Critical Loading 
}
\author{{\large   Victor F. Araman \hspace{0.5cm}  Peter W. Glynn\hspace{0.5cm}}
\thanks{The first author is with the  Olayan School of Business, American University of Beirut, Beirut, va03@aub.edu.lb.  The second author is with the Management Science and Engineering department at Stanford University, Stanford, CA, 74305, glynn@stanford.edu. }}
\begin{document}
    \maketitle \thispagestyle{empty}
\date{}
\renewcommand{\thefootnote}{\fnsymbol{footnote}}
\setcounter{footnote}{1}
\renewcommand{\baselinestretch}{1.2} {\small\normalsize}

\begin{abstract}    
Consider the workload process for a single server queue with deterministic service times in which customers arrive according to a scheduled traffic process. A scheduled arrival sequence is one in which customers are scheduled to arrive at constant interarrival times, but each customer’s actual arrival time is perturbed from her scheduled arrival time by a random perturbation. In this paper, we consider a critically loaded queue in which the service rate equals the arrival rate. Unlike a queue fed by renewal traffic, this queue can be stable even in the presence of critical loading. We identify a necessary and sufficient condition for stability when the perturbations have finite mean. Perhaps surprisingly, the criterion is not reversible, in the sense that such a queue can be stable for a scheduled traffic process in forward time, but unstable for the time-reversal of the same traffic process.

\end{abstract}

\maketitle

\section{Introduction}
In this paper we consider a single server queue fed by \textit{scheduled traffic}. In particular, the $n^{th}$ customer is scheduled to arrive at time $nh$ (with $h>0$), but her actual arrival time occurs at time $n\,h+\xi_n$. We will call the random variable (rv) $\xi_n$ the perturbation associated with customer $n h$ arrival time. 

Scheduled traffic naturally arises when modeling a number of real-world applications. Consider, for instance, a service facility that uses an appointment-based system. Driven by cost efficiency, such systems are becoming the norm in many industries, especially in labor intensive ones such as private banking, medical clinics, and even hair dressing salons. The system provider can often control the duration of the engagement (i.e. the processing time), while she still faces the uncertainty due to customers arriving early or late relative to their appointment times. Relative to renewal traffic, which is often poorly motivated as an arrival model, scheduled traffic seems better suited to many applied domains. %It is therefore important for those providers who are selecting appointment-based systems to understand the implication of working at very high utilization. %It is well known that for a large class of queueing systems, namely those facing a renewal-like input process, the queue's performance is very sensitive to the utilization, as it approaches one, growing to infinity. But, little is known it is not clear what happens in the context of scheduled traffic as we push the utilization close to one. In previous work, To help answering this question, we study the stability of a single server queue with a scheduled traffic, and deterministic processing times under critical loading (i.e., utilization equal to one). 

Scheduled traffic has been analyzed in the literature since \cite{Winsten59}. Most of the work has been restricted to bounded perturbations. Early papers focused on the characterization of the waiting time distribution (see,  \cite{Winsten59}, \cite{Mercer60}, \cite{Loynes62}, and \cite{Mercer73}), which led to formulations that don't lend themselves to direct quantitative computation. \cite{Chen97} used scheduled traffic with deterministic service times to model aircraft landings and looked at the stability of the corresponding single server queue under bounded perturbations. %obtained a probabilistic characterization for the equilibrium waiting time of a single server queue for arrival processes that includes scheduled traffic with bounded perturbations. 
\cite{Kingman62} obtained a heavy-traffic result for single server queues with general arrival processes and showed that the result applies to the case of scheduled traffic. Specifically, the heavy traffic limit theorem of \cite{Kingman62} for the equilibrium distribution of an $S/G/1$ queue ($``S"$ for scheduled traffic) is identical to that of the corresponding $D/G/1$ queue when $G$ has  finite variance and the perturbations are positive with finite mean. More recently, \cite{AramanGlynn12} proved an FCLT for scheduled traffic when the perturbations have infinite mean. The limit involves a fractional Brownian motion, from which they obtained a heavy traffic limit process for the workload. \cite{AramanGlynn22} establish  properties of scheduled traffic and show, for finite-mean Pareto-like perturbations, that an $S/D/1$ queue behaves very differently from both a $D/D/1$ and a $G/D/1$ queue. 

We assume throughout this paper that the $\xi_n$'s are independent and identically distributed (i.i.d.) rv's with $\e |\xi_0|<\infty$. To make the arrival point process time-stationary, we shift the time origin by a uniform amount $U$, so that the number of arrivals in $[a,b]$ is given by $$N(b)-N(a)=\sum_{j=-\infty}^\infty I(jh+hU+\xi_j\in[a,b])$$ for $a<b$, where $U$ is uniform on $[0,1]$, and independent of $\xi=(\xi_n:-\infty<n<\infty)$. We choose $N(0)=0$ in order to ``anchor" $N=(N(t):-\infty<t<\infty).$ %(We also require that $U$ is independent .

Suppose that each customer $n$ has an associated service time $V_n$, where $V=(V_n:-\infty<n<\infty)$ is an i.i.d. sequence independent of $\xi$ and $U$. We assume that the server serves work at unit rate. If the work-in-system at $t=0$ is zero, then the workload in the system at time $t$ is given by 
$$W(t)=\max_{0\leq s\leq t}\sum_{i=N(s)}^{N(t)}V_i-(t-s).$$

If $\e V_1<h$, Loynes' Lemma guarantees that $W(t)\Rightarrow W(\infty)<\infty$ as $t\rightarrow\infty$, where $\Rightarrow$ denotes weak convergence and $W(\infty)$ is finite-valued, so that the queue is ``stable" (see, \cite{Loynes62}). On the other hand, if $\e V_1 > h$, $W(t)\Rightarrow\infty$ as $t\rightarrow\infty$, so that the system is ``unstable". 

If $\e V_1 = h$, the queue is said to be \textit{critically loaded}. We show in Corollary~1 that 
\begin{equation}t^{-1/2}\,W(t)\Rightarrow \sigma X(1)\label{Eq:Workload_rho=1}\end{equation} as $t\rightarrow\infty$, where $\sigma^2=\var ~V_1/h$, and $X=(X(s):s\geq 0)$ is standard reflected Brownian motion. It follows that if $\var ~V_1>0$, then  $W(t)\Rightarrow\infty$ as $t\rightarrow\infty$, so that the queue is unstable.  Note that Kingman's heavy traffic limit is for subcritical queues with a scaling involving $h-\e V_1$ (see \cite{Kingman62}), whereas our limit result (\ref{Eq:Workload_rho=1}) describes critical loading and normalizes by $t^{-1/2}$.

To this point in our discussion of stability, the stability characterization of a queue fed by scheduled traffic is identical to that of a queue fed by renewal traffic. However, if $\e V_1=h$ and $\var ~V_1=0$, so that the service times are deterministic, a new phenomenon appears. In particular, the $S/D/1$ queue  can be stable in the critically loaded regime. %From \cite{AramanGlynn22} one can conclude that for perturbations with Pareto-like right tail, the workload grows to infinity under critical loading. 
As mentioned above, \cite{Chen97} studied $S/D/1$ stability under such regime and showed that when the perturbations are bounded, the $S/D/1$ queue is stable. %Finally, in his seminal work, \cite{Loynes62}  provides a stability condition for a large class of queueing systems that includes queues with  a scheduled traffic input but where processing times have positive variance. 
On the other hand, when the perturbations have a Pareto-like right tail, \cite{AramanGlynn22} show that the workload grows to infinity under critical loading. 

In view of the applied importance of scheduled traffic, this paper aims to give a necessary and sufficient condition for the stability of an $S/G/1$ queue when the perturbations have finite means; see Theorem~1. %The main result in this paper is a characterization of stability in this setting; see Theorem~1. 

The theorem shows that $W(t)\Rightarrow W(\infty)$ as $t\rightarrow\infty$ with $W(\infty)$ being finite-valued if and only if $\xi_0^+\overset {\Delta}{=}\max (\xi_0,0)$ is a bounded rv. One interesting aspect of this stability characterization is that it is not symmetric under time-reversal. In particular, the perturbations of the time-reversal arrival process are i.i.d. and have a common distribution given by that of $-\xi_0$, so that the time-reversed is stable if $\xi_0^-\overset {\Delta}{=}\max (-\xi_0,0)$ is a bounded rv. So, stability is not preserved under time-reversal. Other asymmetric aspects of the behavior of the workload with respect to the arrival process and its time-reversal was also recently observed in \cite{AramanGlynn22}.
\section{The Main Result}
We first prove (\ref{Eq:Workload_rho=1}). For that we start by showing the following result.

%\begin{prop}
%Assume that $\e |\xi_0|^p<\infty$ for some integer $p\geq 1$. Then, as %$n\rightarrow\infty$,
%$$n^{-\frac{1}{p+1}}\max_{0\leq s\leq n}|h\,N(s)-s|\Rightarrow %0.$$
%\end{prop}

\begin{prop}
If $\e|\xi_0|<\infty$, then 
$$\frac{1}{\log n}\max_{0\leq s\leq 1} \left |h\,N(ns)-ns\right |\Rightarrow 0$$
as $n\rightarrow \infty$.\label{Prop:M_cv_0}
\end{prop}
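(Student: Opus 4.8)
I want to control the centered counting process $h N(nt) - nt$ uniformly over $t \in [0,1]$. The key observation is that $N$ counts points of the form $jh + hU + \xi_j$. Writing $M_n(t) = h N(nt) - nt$, I would first reduce to understanding, for a fixed large time horizon, how many of the ``scheduled'' indices $j$ (those with $jh$ near $[0,nt]$) have perturbations $\xi_j$ large enough to push them across the moving boundary at $nt$. Concretely, the discrepancy between $N(nt)$ and the deterministic count $nt/h$ is governed by a sum of the form $\sum_j \big(I(jh + hU + \xi_j \le nt) - I(jh + hU \le nt)\big)$ plus an $O(1)$ rounding term; each summand is nonzero only when $\xi_j$ is comparable to the distance from $jh$ to the boundary.

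First I would establish a pointwise (fixed $t$) bound. For fixed $t$, $h N(nt) - nt$ is, up to $O(1)$, a sum of independent mean-zero-ish contributions, and the number of indices $j$ that can possibly contribute is controlled by $\sum_j I(|\xi_j| \ge \mathrm{dist}(jh, \{0, nt\}))$, whose expectation is $O(\e|\xi_0|) = O(1)$ by summing the tail probabilities $\Pro(|\xi_0| \ge kh) $ over $k$ (this is where $\e|\xi_0| < \infty$ enters decisively — it makes the ``boundary-crossing'' count tight, not growing with $n$). So for each fixed $t$, $h N(nt) - nt = O_p(1)$, which already gives $(\log n)^{-1}(hN(nt)-nt) \Rightarrow 0$ pointwise.

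Next, to upgrade to the uniform-in-$t$ statement I would use a chaining / maximal-inequality argument over a grid of mesh $1/n$ (so $O(n)$ points), combined with monotonicity of $N$ to handle $t$ between grid points. Monotonicity is the crucial structural feature: $h N(\cdot) - (\cdot)$ has increments bounded below by $-$(gap length), so oscillation between consecutive grid points of the deterministic part is $O(1)$, and the oscillation of $hN$ between grid points is controlled by the number of arrival points in an interval of length $n \cdot (1/n) = 1$, which again has expectation $O(1)$. Then a union bound over the $O(n)$ grid points needs the pointwise tails of $h N(nt) - nt$ to decay fast enough that $n \cdot \Pro(|h N(nt) - nt| \ge \varepsilon \log n) \to 0$. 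Since $\log n$ grows, and the contributing terms are dominated by $\sum_j I(|\xi_j| \ge \mathrm{dist})$ — a sum of independent indicators with $O(1)$ mean — I would invoke a Chernoff/Bernstein-type bound: a sum of independent $[0,1]$-valued variables with bounded mean exceeds $\varepsilon \log n$ with probability at most $n^{-c \varepsilon \log\log n}$ or at least $n^{-\omega(1)}$-type decay, which beats the factor $n$ from the union bound. (If the perturbation tail is merely integrable without further moments, one has to be a little more careful: split $\xi_j$ into a bounded part, handled by Bernstein, and a rare large part, whose total contribution over $[0,n]$ is $O_p(1)$ by Markov since its expectation is $O(\sum_{k \ge K}\Pro(|\xi_0|\ge kh)) \to 0$.)

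The main obstacle I anticipate is exactly this last tension: the normalization is only by $\log n$, which is barely more than constant, so the uniform bound cannot come from a crude union bound with polynomial tail estimates — it genuinely needs either an exponential concentration inequality for the boundary-crossing counts or a clever use of monotonicity to reduce the effective number of ``independent'' events well below $n$. I would bet the cleanest route is: (i) reduce $\max_{0\le s\le 1}|hN(ns)-ns|$ to $O(1) + \max$ over $O(n)$ grid points by monotonicity; (ii) bound each grid-point deviation by $O(1) + \sum_{j} I(|\xi_j|\ge \mathrm{dist}_j)$ where the $\mathrm{dist}_j$ sum over a window, so that the relevant random variable is stochastically dominated by $\sum_{j=1}^{n} B_j$ with $B_j$ independent indicators of total mean $O(1)$; (iii) apply a Chernoff bound to get $\Pro(\sum B_j \ge \varepsilon\log n) = o(1/n)$, and finish with the union bound. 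The truncation refinement in step (ii)–(iii) to accommodate only $\e|\xi_0|<\infty$ is the delicate bookkeeping, but it is routine once the architecture is in place.
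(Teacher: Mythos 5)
Your proposal is correct and follows essentially the same route as the paper: your ``boundary-crossing counts'' are exactly the paper's early/late counts $\cE(t)$ and $\cL(t)$, which are sums of independent indicators with $O(1)$ total mean and hence satisfy the Poisson-type Chernoff bound $\e\exp(\theta\cE(0))\leq\exp((e^\theta-1)(\e\xi^-/h+1))<\infty$ for every $\theta>0$; the paper then runs precisely your grid-plus-monotonicity reduction followed by a union bound over $O(n)$ grid points, choosing $\theta\varepsilon>1$ so that $(n+1)\,n^{-\theta\varepsilon}\rightarrow 0$. The only refinement you flagged that turns out to be unnecessary is the truncation of $\xi_j$: since the summands are already $\{0,1\}$-valued indicators, the exponential moment bound needs nothing beyond $\e|\xi_0|<\infty$.
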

\vspace{0.cm}

\begin{proof} Define $\cE(t)=\sum_{ih+Uh>t}I(ih+Uh+\xi_i\leq t)$ and $\cL(t)=\sum_{ih+Uh<t}I(ih+Uh+\xi_i\geq t)$. If $\e |\xi_0|<\infty$, it is not hard to show that $\cE$ and $\cL$ are non-negative time-stationary sequences. Moreover, we can write  for $\theta>0$,
\begin{align*}
    \e\exp(\theta\cE(0))&=\e\exp(\theta\sum_{jh+Uh>0}I(jh+Uh+\xi_j\leq 0))\\
    &=\int_0^1\e\exp(\theta\sum_{j\geq 0}I(-\xi_j\geq -jh+uh))du\\
    &=\int_0^1 \exp(\sum_{j\geq 0}\log(1+\Pro(-\xi_0\geq -jh+uh)(e^{\theta}-1))du\\
    &\leq \int_0^1 \exp((e^{\theta}-1)\sum_{j\geq 0}\log(1+\Pro(-\xi_0\geq jh+uh))du\\
    &\leq \int_0^1 \exp((e^{\theta}-1)\sum_{j\geq 0}\Pro(-\xi_0/h-u\geq j))du\\
    &\leq\exp((e^\theta-1)(\e\xi^-/h+1))<\infty.
\end{align*}
A similar argument shows that  $$\e\exp(\theta\cL(0))\leq\exp((e^\theta-1)(\e\xi^+/h+1))<\infty.$$
From Proposition~2 in \cite{AramanGlynn22} we have that for any $t\geq 0$  
\begin{equation*}
N(t)=(\sum_{ih+Uh\in(0,t]}1)+(\cE(t)-\cL(t))-(\cE(0)-\cL(0)).    
\end{equation*}
Therefore, if $O_p(1)$ denotes a term stochastically bounded in $n$ we write,
\begin{align*}
    &\max_{0\leq s\leq nh}\left|\,N(s)-s/h\right|\\
    &\leq \max_{0\leq s\leq nh}\left[\cE(s)+\cL(s)\right]+O_p(1)\\
    &\leq \max_{0\leq k\leq n}\left[\cE((k+1)h)+\cL(kh)+\max_{0\leq s\leq h}\left[\cE((k+1)h-s)-\cE(k+1)h) \right]+\max_{0\leq s\leq h}\left[\cL(s+kh)-\cL(kh) \right]\right]+O_p(1)\\
    &\leq \max_{1\leq k\leq n+1}\cE(kh) +\max_{0\leq k\leq n}\cL(kh)+\max_{0\leq k\leq n+1}    [N((k+1)h)-N(kh)]+O_p(1).
\end{align*}

\vspace{0.2cm}

Also for $\varepsilon>0$ and $\theta>0$, 
\begin{align*}
\Pro(\max_{1\leq k\leq n+1}\cE(kh)>\varepsilon\,\log n) &\leq \sum_{k=1}^{n+1}\Pro(\cE(kh)>\varepsilon\,\log n)\\
     &=(n+1)\,\Pro(\cE(0)>\varepsilon\,\log n)\\
    &\leq (n+1)\, \e\frac{\exp(\theta\cE(0))}{e^{\theta\varepsilon\log n}}I(\exp(\theta\cE(0))>e^{\theta\varepsilon\log n}) \\
    &\leq (n+1)\,n^{-\theta\varepsilon}\,\e\exp(\theta\cE(0))
\end{align*}
By choosing $\theta\,\varepsilon>1$, we conclude that $\frac{1}{\log n}\max_{1\leq k\leq n+1}\cE(kh)\overset{p}{\rightarrow} 0$ as $n\rightarrow\infty$. Similarly, we show that $\frac{1}{\log n}\max_{0\leq k\leq n}\cL(kh)\overset{p}{\rightarrow} 0$ as $n\rightarrow\infty$. Finally, we use the fact that the rv's $\{N((k+1)h)-N(kh):0\leq k\leq n+1\}$ are identically distributed and $\e\exp(\theta N(h))<\infty$ for $\theta>0$ (see again \cite{AramanGlynn22}). Hence, the above argument proves similarly that $\frac{1}{\log n}\max_{0\leq k\leq n+1}(N((k+1)h)-N(kh))\overset{p}{\rightarrow} 0$ as $n\rightarrow\infty$,  proving the result. \end{proof}

\begin{corollary}
Suppose that $\e|\xi_0|<\infty$ and $\var ~V_1<\infty.$ If $h=\e V_1$, then $$t^{-1/2}\,W(t)\Rightarrow \sigma X(1)$$
as $t\rightarrow\infty$, where $X=(X(s):s\geq 0)$ is a reflected Brownian motion with mean 0 and unit volatility, with $X(0)=0.$
\end{corollary}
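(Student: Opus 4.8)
The plan is to represent the workload as the image of a ``net input'' process under the one-dimensional reflection (Skorokhod) map, to establish a functional central limit theorem (FCLT) for that net input process, and then to push the limit through the continuous reflection map and evaluate at time $1$. First I would rewrite $W$ in reflected form. Writing $S_m=\sum_{i=1}^m V_i$ for the partial sums of the service times indexed in order of arrival and $S(t)=S_{N(t)}$, one has $\sum_{i=N(s)}^{N(t)}V_i=(S(t)-S(s))+V_{N(s)}$, so that with $Z(t)\defi S(t)-t$,
\begin{equation*}
Z(t)-\min_{0\le s\le t}Z(s)\ \le\ W(t)\ \le\ Z(t)-\min_{0\le s\le t}Z(s)+\max_{0\le s\le t}V_{N(s)}.
\end{equation*}
Since $\e V_1^2<\infty$ we have $m\,\Pro(V_1>\varepsilon\sqrt m)\to 0$, and since $N(t)/t\to 1/h$ this gives $t^{-1/2}\max_{0\le s\le t}V_{N(s)}\overset{p}{\rightarrow}0$. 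Hence it suffices to prove the claim with $W(t)$ replaced by the reflection $R(t)\defi Z(t)-\min_{0\le s\le t}Z(s)$.

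Next I would prove $n^{-1/2}Z(n\,\cdot\,)\Rightarrow \sigma B(\cdot)$ in $D[0,\infty)$ with the $J_1$ topology, where $B$ is standard Brownian motion and $\sigma^2=\var(V_1)/h$. Decompose
\begin{equation*}
Z(ns)=\sum_{i=1}^{N(ns)}(V_i-h)+\bigl(h\,N(ns)-ns\bigr).
\end{equation*}
By Proposition~\ref{Prop:M_cv_0}, $n^{-1/2}\max_{0\le s\le 1}|h\,N(ns)-ns|\le (\log n/\sqrt n)\cdot o_p(1)\overset{p}{\rightarrow}0$, so the second term is negligible at scale $\sqrt n$, and the same proposition also yields $N(ns)/n\to s/h$ uniformly on compacts, in probability. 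For the first term, Donsker's theorem applied to the i.i.d.\ mean-zero finite-variance summands $V_i-h$ gives $n^{-1/2}\sum_{i=1}^{\lfloor nu\rfloor}(V_i-h)\Rightarrow \var(V_1)^{1/2}B(u)$; as the service times are independent of $(\xi,U)$ this holds jointly with $N(n\,\cdot\,)/n\to(\cdot)/h$, and continuity of the composition map at continuous limits gives $n^{-1/2}\sum_{i=1}^{N(ns)}(V_i-h)\Rightarrow \var(V_1)^{1/2}B(s/h)$. By Brownian scaling $\{B(s/h):s\ge0\}\stackrel{d}{=}\{h^{-1/2}B(s):s\ge0\}$, so the limit is $\sigma B(\cdot)$ with $\sigma^2=\var(V_1)/h$; combining the two terms by a converging-together argument yields $n^{-1/2}Z(n\,\cdot\,)\Rightarrow\sigma B(\cdot)$.

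Finally, the one-dimensional reflection map $x\mapsto\bigl(x(t)-\min_{0\le s\le t}x(s)\bigr)_{t\ge0}$ is continuous on $D[0,\infty)$ in the $J_1$ topology at paths $x$ with $x(0)=0$, so the continuous mapping theorem gives $n^{-1/2}R(n\,\cdot\,)\Rightarrow \sigma X(\cdot)$ with $X(t)=B(t)-\min_{0\le s\le t}B(s)$ a reflected standard Brownian motion; applying the projection $\pi_1$ (continuous at continuous paths) and using the sandwich from the first step gives $n^{-1/2}W(n)\Rightarrow \sigma X(1)$. The extension to $t\to\infty$ through the reals is routine: run the same argument with the continuous scaling parameter $t$ in place of $n$ — Proposition~\ref{Prop:M_cv_0} and Donsker's theorem both hold with continuous scaling, using monotonicity in the time argument to pass from $\lfloor t\rfloor$ to $t$ — to obtain $t^{-1/2}W(ts)\Rightarrow\sigma X(s)$ on $D[0,\infty)$, and in particular $t^{-1/2}W(t)\Rightarrow\sigma X(1)$.

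The step I expect to demand the most care is controlling the several lower-order discrepancies \emph{uniformly} in the time parameter and then invoking the random-time-change (Anscombe-type) FCLT cleanly: the $O_p(1)$ boundary terms relating $N$ to the underlying lattice (via Proposition~2 of \cite{AramanGlynn22}), the single extra service time $V_{N(s)}$ in the reflected representation, and the passage from the arrival-ordered partial-sum process to the Donsker array — for which one must observe that relabelling the i.i.d.\ service times by the (independent) arrival-order permutation leaves them i.i.d., so Donsker's theorem still applies. None of these is deep, but assembling them into an $o_p(\sqrt n)$ bound uniform on $[0,1]$ is where the technical work lies.
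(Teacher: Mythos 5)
Your proposal is correct and follows essentially the same route as the paper: the same decomposition of the net input into $\sum_{i=1}^{N(s)}(V_i-\e V_i)$ plus $h\,N(s)-s$, the same use of Proposition~\ref{Prop:M_cv_0} to show the latter is $o_p(\sqrt t)$ uniformly, and then Donsker's theorem with a random time change followed by continuous mapping. The only differences are presentational — you phrase the limit through the reflection map rather than the running maximum, and you spell out some details the paper glosses over (the $V_{N(s)}$ boundary term and the irrelevance of relabelling the i.i.d.\ service times) — but the substance is identical.
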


\begin{proof}
Note that 
\begin{align*}
    &\max_{0\leq s\leq t}\sum_{i=1}^{N(s)}V_i-s\\
    =&\max_{0\leq s\leq t}\sum_{i=1}^{N(s)}(V_i-\e V_i)+h\,N(s)-s
    \end{align*}
From Proposition~\ref{Prop:M_cv_0} we have that $\max_{0\leq s\leq t}h\,N(s)-s=o_p(\log t)$ where $o_p(a_t)$ is a quantity such that $\frac{1}{a_t}\overset{p}{\rightarrow}0$ as $t\rightarrow\infty$. Given that $N(ts)/t\rightarrow s/h~a.s.$ as $n\rightarrow\infty$, we conclude that
\begin{align*}
t^{-1/2}\max_{0\leq s\leq t}\sum_{i=1}^{N(s)}V_i-s    &=t^{-1/2}\max_{0\leq s\leq 1}\sum_{i=1}^{N(st)}(V_i-\e V_i)+ o_p(1)\\
&\Rightarrow \frac{\sigma}{\sqrt{h}} \,X(1)
\end{align*}
as $n\rightarrow\infty$
which proves the result. 
\end{proof}

\vspace{0.2cm}

We next turn to discuss the stability of the $S/D/1$ queue.

\begin{theorem}
Suppose that $(W(t):t\geq 0)$ is the workload process associated with the $S/D/1$ queue. Suppose that $V_1=h~a.s.$ and $W(0)=0.$ Then, there exists a finite-valued rv $W(\infty)$ such that $$W(t)\Rightarrow W(\infty)$$ as $t\rightarrow\infty$ if and only if $\xi_0^+$ is a bounded rv. 
\end{theorem}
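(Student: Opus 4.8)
The plan is to reduce the statement to a question about a single stationary--increment process together with a zero--one law, and then to treat the two implications separately; the reverse implication is where the real work lies.

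\emph{Reduction.} Since $V_i\equiv h$, writing $Z(r)=hN(r)-r$ and $A_j=jh+hU+\xi_j$, the workload formula becomes $W(t)=h+\max_{0\le s\le t}\big(Z(t)-Z(s)\big)=h+\max_{0\le u\le t}\big(h\,\#\{A_j\in(t-u,t]\}-u\big)$. Because the point process $\{A_j\}$ is time--stationary, for each fixed $t$ the process $u\mapsto\#\{A_j\in(t-u,t]\}$ has the same law as $u\mapsto\#\{A_j\in(-u,0]\}=:N^-(u)$, so $W(t)\overset{d}{=}h+\max_{0\le u\le t}\big(hN^-(u)-u\big)$. The right side is nondecreasing in $t$, hence $W(t)\Rightarrow h+W_\infty$ with $W_\infty:=\sup_{u\ge0}\big(hN^-(u)-u\big)$ (the stationary workload), and the queue is stable exactly when $W_\infty<\infty$ a.s. Finally $\{W_\infty=\infty\}$ is invariant under the $h$--shift of the i.i.d.\ sequence $\xi$ (the workload is infinite at one instant iff it is infinite at the next), so by ergodicity $\Pr(W_\infty=\infty)\in\{0,1\}$ and it suffices to determine which value occurs.

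\emph{Sufficiency: $\xi_0^+$ bounded $\Rightarrow$ stable.} Suppose $\xi_0\le b$ a.s. Every $j$ counted by $N^-(u)$ has schedule time $jh+hU=A_j-\xi_j>-u-b$; among those, the ones scheduled in $(-u-b,0]$ number at most $(u+b)/h+1$, while the ones scheduled after $0$ number at most $\cE(0)=\#\{j:jh+hU>0,\ A_j\le0\}$. Hence $hN^-(u)-u\le b+h+h\,\cE(0)$ for all $u\ge0$, and since $\cE(0)$ has finite exponential moments (as in the proof of Proposition~\ref{Prop:M_cv_0}) it is a.s.\ finite, so $W_\infty\le b+h+h\,\cE(0)<\infty$ a.s.

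\emph{Necessity: $\xi_0^+$ unbounded $\Rightarrow$ unstable.} Argue by contradiction, assuming $W_\infty<\infty$ a.s. In the time--stationary version, with $W_\infty^{(v)}$ the workload at $v$, balance of work on $[0,T]$ gives $I(T)=T-W_\infty^{(0)}-h\,\#\{A_j\in(0,T]\}+W_\infty^{(T)}$ for the cumulative idle time $I(T)$; dividing by $T$ and using $\#\{A_j\in(0,T]\}/T\to1/h$ a.s., $W_\infty^{(0)}/T\to0$, and $\liminf_T W_\infty^{(T)}/T=0$ a.s.\ (a finite--valued stationary process returns below a fixed level along times $T\to\infty$, by the ergodic theorem applied to the indicator of $\{W_\infty^{(\cdot)}\le C\}$), one obtains $\liminf_T I(T)/T=0$ a.s. I would then contradict this by producing forced idleness of positive density. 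Partition the customers into consecutive blocks of size $m$ and call a block \emph{good} if (i) all $m$ of its perturbations are $\ge mh$, so all its customers arrive at least $mh$ late; (ii) the workload just before its first customer is scheduled is $<mh/2$; and (iii) customers outside the block bring less than $mh/4$ work into the window of length $mh$ starting at the block's first scheduled time. On a good block the server is offered less than $3mh/4$ work over an interval of length $mh$, hence idles at least $mh/4$ there. Event (i) has probability $\Pr(\xi_0\ge mh)^m>0$ (positive precisely because $\xi_0^+$ is unbounded) and is independent across disjoint blocks; (ii) and (iii) are tight uniformly over blocks and, conditionally on (i)---which pushes the block's arrivals past the relevant window---essentially independent of it, so a block is good with probability $\ge\kappa(m)>0$ not depending on the block. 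The ergodic theorem then forces the fraction of good blocks among the first $K$ to converge to at least $\kappa(m)$, and since consecutive blocks occupy essentially disjoint time windows this yields $I(T)\ge(\kappa(m)/4)\,T+o(T)$, i.e.\ $\liminf_T I(T)/T>0$, a contradiction; hence $W_\infty=\infty$ a.s.

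\emph{Main obstacle.} The delicate part is the necessity direction. Beyond the bookkeeping above, one must make $\liminf_T W_\infty^{(T)}/T=0$ rigorous with only a finite--mean hypothesis (no a priori moment bound on the stationary workload), and one must genuinely decouple the block's lateness in (i) both from the ambient workload in (ii) and from the out--of--block arrivals in (iii), which requires a careful choice of windows and a truncation placing the relevant perturbations in disjoint index blocks. An alternative with the same single hard point avoids the idleness picture: show by Borel--Cantelli over widely separated scales $u_n\to\infty$ that the $m$ customers scheduled just before $-u_n$ each land in $(-u_n,0]$ with probability bounded below (again by unboundedness of $\xi_0^+$), so some window $(-u_n,0]$ exceeds its expected count by $m$ and hence $W_\infty=\sup_u(hN^-(u)-u)=\infty$; here the only real issue is decoupling these events from the ``losses'' shared near the time origin.
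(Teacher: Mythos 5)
Your reduction via time-stationarity to $W_\infty=\sup_{u\ge 0}(hN^-(u)-u)$ is the same Loynes-type reversal the paper uses, and your sufficiency argument is correct and complete: bounding the backward count by the grid count on $(-u-b,0]$ plus the early-arrival term $\cE(0)$, which is a.s.\ finite because $\e\xi_0^-<\infty$, is in substance the paper's bound $M(\infty)\le\max_s\beta_1(s)+\max_s\beta_2(s)$ with $\beta_2(s)\le(\lfloor c\rfloor+1)/h$; your bookkeeping is arguably cleaner. (The zero--one law is a harmless extra, though note the $h$-shift fixes $U$, so strictly one should argue conditionally on $U$; the paper needs no such step since it proves a.s.\ statements directly in both directions.)

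The necessity direction is where the proposal falls short of a proof. Your strategy --- work conservation plus forced idleness of positive density, argued by contradiction --- is genuinely different from the paper's and looks viable, but the steps you yourself flag as ``the main obstacle'' are exactly the ones left undone. Concretely: (a) to get $\Pro(\mbox{block good})\ge\kappa(m)>0$ you must observe that \emph{on} event (i) the block's customers arrive after the window, so the ambient workload in (ii) and the spill-over count in (iii) become functionals of the non-block perturbations only (hence exactly, not ``essentially,'' independent of (i)); you also need the contradiction hypothesis $W_\infty<\infty$ just to give (ii) probability bounded below, and exponential moments of $\cE$ and $\cL$ for (iii); (b) the ergodic-theorem step needs the goodness indicators to be a stationary ergodic sequence, which again holds only conditionally on $U$. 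None of this is executed. The paper avoids the entire work-balance apparatus: it lower-bounds the running maximum directly by $\beta_2(s)-\beta_3(s)-\beta_4(s)$, uses the ergodic theorem to produce stopping times $\tau_i\to\infty$ at which the loss term $Y_{\tau_i}\le l$, and then exploits that $\tilde Y_{\tau_i}$ is a function of $(\xi_j:j>\tau_i)$ and hence independent of ${\cal F}_{\tau_i}$, so the conditional Borel--Cantelli lemma yields $\beta_2(\tau_i h)>r$ infinitely often; no stationary workload, no contradiction, and no idleness accounting are needed. Your closing ``alternative'' sketch (widely separated scales plus Borel--Cantelli) is essentially the paper's argument, and the ``only real issue'' you name there --- decoupling the pile-up of very late customers from the losses --- is precisely what the paper's stopping-time construction resolves. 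As submitted, the forward implication is a plan rather than a proof.
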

\begin{proof}
Loynes' lemma ensures that $W(t)\overset{D}{=}M(t)$, where $(M(t):0\leq t\leq \infty))$ is the running maximum of the time-reversed arrival process, given by 
\begin{align*}
M(t)&=\max_{0\leq s\leq t}\left[\sum_{j=-\infty}^{\infty}I(jh+Uh-\xi_j\in(0,s])h-s\right]\\
&\overset{D}{=}\max_{0\leq s\leq t}\left[\Lambda(s)-s\right].
\end{align*}
Put $k(s)=\lfloor s/h\rfloor$ and note that
\begin{align*}
    \Lambda(s)-h\,\lfloor s/h\rfloor &= \sum_{j=1}^\infty I(Uh-jh-\xi_{-j}\in(0,s])\\
    &~~~~~~~~~+\sum_{j=k(s)+1}^\infty I(Uh+jh-\xi_{j}\in(0,s])\\
    &~~~~~~~~~-\sum_{j=1}^{k(s)} I(Uh+jh-\xi_{j}\in(-\infty,0])\\
    &~~~~~~~~~-\sum_{j=1}^\infty I(Uh+jh-\xi_{j}\in(s,\infty))\\
    &\overset{\Delta}{=}\beta_1(s)+\beta_2(s)-\beta_3(s)-\beta_4(s).
\end{align*}

Note that $$\beta_1(s)\leq \sum_{j=1}^\infty I(-\xi_{-j}>(j-1)\,h)\overset{\Delta}{=}\Gamma_1.$$
But $\e \Gamma_1<\infty$ if $\e\xi_0^-<\infty$, so that $\Gamma_1<\infty~a.s.$ Consequently, $$\max_{s\geq 0}\beta_1(s)<\infty~a.s.$$

Of course, $$M(t)\leq \max_{s\geq 0}\beta_1(s)+\max_{s\geq 0}\beta_2(s).$$

But if $\xi_0^+\leq c<\infty ~a.s.$, then 
\begin{align*}
    \beta_2(s)\leq \sum_{j=k(s)+1}^\infty I(-\xi_j\leq s-jh)&\leq\sum_{j=1}^\infty I(\xi_{k(s)+j}\geq jh)\\
    &\leq (\lfloor c\rfloor+1)/h
\end{align*}
and hence $M(\infty)<\infty~~a.s.$

Suppose now that $\xi_0^+$ has infinite support. Observe that $$\beta_4(s)\leq \sum_{j=1}^\infty I(\xi_{k(s)-j}>(j-2)\,h)\overset{\Delta}{=}Y_{k(s).}$$

Since $\e \xi_0^-<\infty$, $Y_0<\infty~a.s.$ and there exists $l<\infty$ such that $$\Pro(Y_0>l)=\Pro(Y_j>l)<1.$$

The rv $Y_n$ can be represented as $Y_n=f(\xi_{n+j}:j\in\mathbb{Z})$, where the $\xi_j$'s are i.i.d.. The ergodic theorem therefore ensures that 
$$\frac{1}{n}\sum_{j=1}^{n}I(Y_j\leq l)\rightarrow \Pro(Y_0\leq l)>0~~a.s.$$
as $n\rightarrow\infty,$ so that there exists a non-negative sequence $(\tau_i:i\geq 1)$ such that $\tau_i\rightarrow\infty$ as $i\rightarrow\infty$ and $Y_{\tau_i}\leq l$ for $i\geq 1$. Note that the $\tau_i$'s are stopping times adapted to $({\cal F}_j:j\geq 0)$ , where ${\cal F}_j=\sigma(U,\xi_k:-\infty<k\leq j)$. 

Observe that $$\beta_2(s)\geq \sum_{j=1}^{\infty}I(\xi^+_{k(s)+j}\geq (j+2)\,h)\overset{\Delta}{=}\tilde Y_{k(s)}.$$ For each $r\in\Z_+$, the infinite support of $\xi_0^+$ implies that $$\Pro(\tilde Y_0>r)=\Pro(\tilde Y_i>r)>0.$$ Because $\tilde Y_{\tau_i}$ is independent of $(\xi_j:j\leq \tau_i),$ 
\begin{align*}
\Pro(\beta_2(\tau_i h)>r~|{\cal F}_{\tau_i})&\geq \Pro(\tilde Y_{k(\tau_i h)}>r~|{\cal F}_{\tau_i})\\
&=\Pro(\tilde Y_{\tau_i}>r~|{\cal F}_{\tau_i})\\
&=\Pro(\tilde Y_{0}>r).
\end{align*}

It follows from the conditional Borel Cantelli lemma (see, \cite{Doob}) that $\beta_2(\tau_i h)>r$ infinitely often a.s. Finally, 
\begin{align*}
\beta_3(s)&\leq \sum_{j=1}^\infty I(Uh+jh-\xi_j\in(-\infty,0])\\
&\leq \sum_{j=1}^\infty I(\xi_j\geq (j-1)\,h)\overset{\Delta}{=}\Gamma_2<\infty~a.s.
\end{align*}
since $\e\xi_0^+<\infty.$ Hence,
\begin{align*}
    \max_{s\geq 0} & ~\Lambda(s)-s\\
    &\geq \limsup_{i\rightarrow\infty}\beta_2(\tau_i h)-\Gamma_2-Y_{\tau_i} \\
    &\geq r- \Gamma_2- l~~a.s.
\end{align*}
Since $r$ can be made arbitrarily large, we may conclude that $$\max_{s\geq 0}\Lambda(s) - s=\infty ~~a.s.,$$ so that $W(t)\Rightarrow\infty$ as $t\rightarrow\infty,$ proving the theorem.
\end{proof}
\vspace{0.2cm}

According to Theorem~1, the $S/D/1$ queue can be stable in critical loading. Furthermore, Theorem~1 establishes that only the right tail of $\xi_0$ affects stability. The fact that the right tail of $\xi_0$ appears to have a greater impact on the performance of the $S/D/1$ queue than does the left tail can also be seen in the heavy traffic limit theory for the $S/D/1$ queue; see Theorems~5 and 6 of \cite{AramanGlynn22} for details. Intuitively, ``late arrivals" (controlled by the the right tail of $\xi_0$) affect the performance of a scheduled queue in a greater degree than the ``early arrivals" (controlled by the left tail of $\xi_0$).  
\bibliographystyle{ormsv080}
\bibliography{BiblioApP}
\end{document}